\newcommand{\bb}{\begin{eqnarray*}}
\newcommand{\ee}{\end{eqnarray*}}
\newcommand{\bbl}{\begin{eqnarray}}
\newcommand{\eel}{\end{eqnarray}}
\newcommand{\LB}{\begin{Lemma}}
\newcommand{\LE}{\end{Lemma}}
\newcommand{\CB}{\begin{Corollary}}
\newcommand{\CE}{\end{Corollary}}
\newcommand{\PB}{\begin{Proposition}}
\newcommand{\PE}{\end{Proposition}}
\newcommand{\TB}{\begin{Theorem}}
\newcommand{\TE}{\end{Theorem}}
\newtheorem{Lemma}{Lemma}[section]
\newtheorem{Proposition}[Lemma]{Proposition}
\newtheorem{Theorem}[Lemma]{Theorem}
\newtheorem{Corollary}[Lemma]{Corollary}
\begin{document}
\begin{center}
{\large \bf Equilibrium distributions and discrete Schur-constant models}\\

\vspace{7mm}
{Anna Casta\~ner, M. Merc\`e Claramunt}\\
{Universitat de Barcelona, Departament de Matem\`atica Econ\`omica, Financera i Actuarial,\\
690 Avinguda Diagonal, E-08034 Barcelona, Spain}
\end{center}

\vspace{3mm}
\begin{abstract}
This paper introduces Schur-constant equilibrium distribution models of dimension $n$ for arithmetic non-negative random variables. Such a model is defined through the (several orders) equilibrium distributions of a univariate survival function. First, the bivariate case is considered and analyzed in depth, stressing the main characteristics of the Poisson case. The analysis is then extended to the multivariate case. Several properties are derived, including the implicit correlation and the distribution of the sum.
 
\end{abstract}

\noindent \textbf{Keywords}: Schur-constant property;  discrete stationary-excess operator; discrete equilibrium distributions.

\noindent \textbf{Mathematics Subject Classification (2010)}: 60E05, 62H05. 

\section{Introduction}
Schur-constant models for discrete survival data have been studied by several authors including Casta\~ner et al. (2015), Lef\`evre et al. (2017) and Ta and Van (2017). In this case the discrete setting means that it is valued at $\mathbb{N}_0=\left\{0,1,...\right\}$. There is also a vaste literature regarding Schur-constant models for continuous survival data valued in $\mathbb{R}_+$; let us mention, among others, Caramellino and Spizzichino (1994), Chi et al. (2009) and Nelsen (2005). Casta\~ner et al. (2015) discuss the properties of  a Schur-constant vector $(X_1, \ldots, X_n)$ generated by the survival function of an univariate random variable (r.v.). In that paper, it is shown that this survival function has to be $n$-monotone.  The present paper aims to develop a class of Schur-constant models generated by an admissible univariate survival function. The admissibility condition is the existence of the $(n-1)-th$ equilibrium distribution of the univariate r.v. Because of it, this new model is called \emph{Schur-constant (multivariate) equilibrium distribution model}. Some properties of the bivariate continuous case have been previously studied by Nair and Sankaran (2014).  For the sake of completeness, we include in this introduction some definitions and well-known results.

Let $(X_1, \ldots, X_n)$ be a vector of $n$ ($\geq 2$) arithmetic non-negative random variables, called lifetimes. It is said to have a Schur-constant joint survival function if for all $(x_1, \ldots, x_n) \in \mathbb{N}_0^n$,

\begin{equation}\label{Intro.1}
P(X_1\geq x_1, \ldots, X_n\geq x_n) = S(x_1+\ldots +x_n),
\end{equation}
where $S$ is an admissible function from $\mathbb{N}_0$ to $[0,1]$. In fact, $S$ is the survival function of each of the marginal r.v. $X_i$ and it has to be $n$-monotone on $\mathbb{N}_0$. A function $f(x)$: $\mathbb{N}_0 \to \mathbb{R}$ is said to be $n$-monotone if it satisfies
\begin{equation}\label{Intro.2}
(-1)^j\, \Delta^j f(x) \geq 0, \text{ for } j=0, \ldots, n,
\end{equation}
where $\Delta$ is the forward difference operator (i.e. $\Delta f(x)= f(x+1)-f(x)$) and $\Delta^j$ is its $j-th$ iterated. 

A general representation (Casta\~ner et al., 2015) valid for any discrete Schur-constant model, putting ${a \choose b}= 0$ when $a<b$, is: 
\begin{equation}\label{1.1}
S(x_1+\ldots+x_n) = E\left[{Z-(x_1+\ldots+x_n)+n-1\choose n-1}/{Z+n-1\choose n-1}\right],
\end{equation}
where the variable $Z$ is distributed as $X_1+\ldots + X_n$, i.e., with a probability mass function (p.m.f.) given by 
\begin{equation}\label{1.2}
P(Z=z) = (-1)^n\, \Delta^n S(z)\, {z+n-1\choose n-1}.
\end{equation}
As a particular case, for  $n=2$, 
\begin{equation}\label{1.3}
P(Z=z)=\Delta^2 S(z)(z+1).
\end{equation}

The p.m.f. of any subvector in $(X_1, \ldots, X_n)$ can be obtained from the $j-th$ iterated forward difference of $S$,
\begin{equation}\label{1.a}
P(X_1=x_1, \ldots, X_j=x_j) = (-1)^j\, \Delta^jS(x_{1}+ \ldots+ x_{j}). 
\end{equation}

For bivariate Schur-constant vectors we can obtain an easy expression of the p.m.f. of the marginal. This is the next Lemma \ref{l.1}, which will be used in the next section of the paper.

\begin{Lemma}\label{l.1}
If the vector $(X_1,X_2)$ is Schur-constant, the probability mass function of $X_1$ fulfills

\begin{equation}\label{Z.1}
P(X_1=x+1)=P(X_1=x) - \frac{P(Z=x)}{x+1},
\end{equation}
for $x\geq 0$, with $$P(X_1=0)=E\left(\frac{1}{Z+1}\right).$$
\end{Lemma}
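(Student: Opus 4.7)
The plan is to compute the marginal probabilities directly from the formulas already established in the introduction, namely (\ref{1.a}) for the marginal, (\ref{1.3}) for the distribution of $Z$, and (\ref{1.1}) for the representation of $S$.

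First, I would apply (\ref{1.a}) with $j=1$, which gives $P(X_1=x)=-\Delta S(x)=S(x)-S(x+1)$ for every $x\geq 0$. Writing this once at $x$ and once at $x+1$ and subtracting produces
\[
P(X_1=x+1)-P(X_1=x) \;=\; -\bigl[S(x+2)-2S(x+1)+S(x)\bigr] \;=\; -\Delta^2 S(x).
\]

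Next, I would use (\ref{1.3}) in the bivariate case, which reads $P(Z=x)=\Delta^2 S(x)\,(x+1)$, to substitute $\Delta^2 S(x)=P(Z=x)/(x+1)$ in the right-hand side above. Rearranging yields exactly the recurrence (\ref{Z.1}).

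For the initial value, I would use the integral representation (\ref{1.1}) with $n=2$: with the convention $\binom{a}{b}=0$ for $a<b$, one has $S(0)=1$ trivially, while
\[
S(1)\;=\;E\!\left[\frac{\binom{Z}{1}}{\binom{Z+1}{1}}\right]\;=\;E\!\left[\frac{Z}{Z+1}\right]\;=\;1-E\!\left[\frac{1}{Z+1}\right],
\]
so $P(X_1=0)=S(0)-S(1)=E[1/(Z+1)]$. The only place where one has to be mildly careful is the second computation, since the convention on the binomial coefficients has to be used to handle the outcome $Z=0$ correctly; otherwise the proof is a direct chain of substitutions, and I do not anticipate a genuine obstacle.
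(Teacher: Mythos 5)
Your proof is correct and follows essentially the same route as the paper: the recurrence comes from $\Delta^2 S(x)=P(X_1=x)-P(X_1=x+1)$ combined with (\ref{1.3}), exactly as in the paper's argument. The only cosmetic difference is the initial value, which you obtain by evaluating (\ref{1.1}) at $x=1$ to get $S(1)=E\left[Z/(Z+1)\right]$, whereas the paper first derives the general formula $P(X_1=x)=\sum_{z=x}^{\infty}P(Z=z)/(z+1)$ and sets $x=0$; both rest on the same representation (\ref{1.1}).
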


\begin{proof}
The survival function of $X_1$, for $n=2$, is obtained from (\ref{1.1}) when $x_2=0$,
\begin{equation}\label{Z.2}
S(x)=E\left[\frac{(Z-x+1)_+}{Z+1}\right]= \sum_{z=x}^{\infty} \frac{z-x+1}{z+1}P(Z=z).
\end{equation}
From definition, $P(X_1=x)=-\Delta S(x)$. Then, considering (\ref{Z.2}),
\begin{equation}\label{Z.0}
P(X_1=x)= \sum_{z=x}^{\infty} \frac{1}{z+1} P(Z=z).
\end{equation}
From (\ref{Z.0}), for $x=0$, $P(X_1=0)=E\left(\frac{1}{Z+1}\right)$. Now, let us apply again the definition of the survival function so that $\Delta^2 S(x)=P(X_1=x)-P(X_1=x+1)$. Using this expression and (\ref{1.3}), the desired result (\ref{Z.1}) is obtained.
\end{proof}
On the basis of these definitions, we develop our analysis. The paper is organized as follows. In Section 2, we introduce the discrete Schur-constant bivariate equilibrium distribution model and provide its characteristics. We prove that in the Poisson bivariate Schur-constant equilibrium distribution model, $Z$ is also Poisson distributed. In Section 3, we extend the analysis to the multivariate case. The paper ends with some concluding remarks.

\section{Schur-constant bivariate equilibrium distribution model}
In renewal theory, the asymptotic distribution of the age or residual life in a renewal process is known as the univariate equilibrium distribution (Cox, 1962).  Let $X$ be a non negative discrete random variable representing the lifetime of a component with finite mean $\mu$ and survival function $S(x)=P(X\geq x), x \geq 0$. The (first order) equilibrium distribution is defined through the probability mass function, as

\begin{equation}\label{2*.6}
p^*(x)=\frac{S(x+1)}{\mu}, \text{ for }x\in \mathbb{N}_0,
\end{equation}
or, via the survival function, as
\begin{equation}\label{2*.7}
S^*(x)=\sum_{h=x+1}^\infty \frac{S(h)}{\mu}, \text{ for }x\in \mathbb{N}_0.
\end{equation}
Expression (\ref{2*.7}) is well defined as survival function as long as it is a decreasing function and $S^*(0)=1$. This latter condition can be easily checked taking into account that $E(X)= \sum_{h=1}^\infty S(h)$.
 
Much attention has been paid to the equilibrium distributions (of first and higher orders) associated with a given distribution function, but most studies are for continuous univariate random variables (see for instance Deshpande et al. (1986), Sunoj (2004) and the references therein). Regarding the discrete univariate case, two papers should be mentioned: Li (2011) and Willmot et al. (2005). In addition, Lef\`evre and Loisel (2010) introduce a specific discrete version of the stationary-excess operator for discrete non-negative random variables. This discrete stationary-excess operator $H$ maps any non-negative discrete random variable $X$ to an associated discrete non-negative random variable $X_H$ whose probability mass function is
$$
P(X_H =x)=\frac{P(X\geq x+1)}{E(X)},\text{ \ \ } x\in \mathbb{N}_0.
$$
Notice that the equilibrium distribution can be also obtained by applying this discrete stationary-excess operator. 

In this section, we discuss a (first order) bivariate equilibrium distribution of discrete random variables. It represents in fact a family of discrete Schur-constant distributions, that we call \emph{Schur-constant bivariate equilibrium distributions}. Although it has been recently analized by Gupta (2012) and Nair and Sankaran (2014) in a continuous setting, as far as we know, the bivariate discrete case has not previously been evaluated. The asymptotic joint distribution of age and residual life is defined by the survival function
\begin{equation}\label{2*.8}
G(x,y)=\frac{1}{\mu}\sum_{h=x+y+1}^\infty S(h)=S^*(x+y).
\end{equation}

Now we prove, in the next proposition, that this survival function corresponds to a Schur-constant survival function.
\begin{Proposition}\label{Prop.2.1}
The random vector $(X^*, Y^*)$ with survival function $G$ given by (\ref{2*.8}) is Schur-constant and its marginal distribution is the equilibrium distribution of $X$ given by (\ref{2*.7}). 
\end{Proposition}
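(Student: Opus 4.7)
The plan is to observe that the Schur-constant property is essentially built into the definition of $G$, and that the real content is to verify the admissibility (i.e., $2$-monotonicity) of $S^*$ so that $G$ qualifies as a genuine Schur-constant survival function in the sense of (\ref{Intro.1})--(\ref{Intro.2}).

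First, I would note that by (\ref{2*.8}) we have $G(x,y)=S^*(x+y)$, so $G$ depends on $(x,y)$ only through the sum $x+y$. This matches the defining equation (\ref{Intro.1}) of a Schur-constant joint survival function with generating function $S^*$ and $n=2$. It then remains to check that $S^*$ is an admissible bivariate generator, that is: $S^*(0)=1$, $S^*\geq 0$, $\Delta S^*\leq 0$, and $\Delta^2 S^*\geq 0$.

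The normalisation $S^*(0)=1$ is exactly the remark following (\ref{2*.7}) using $\mu=\sum_{h\geq 1}S(h)$. Non-negativity is immediate since $S\geq 0$. For the first two differences, a direct computation from (\ref{2*.7}) gives
\begin{equation*}
\Delta S^*(x)=S^*(x+1)-S^*(x)=-\frac{S(x+1)}{\mu}\leq 0,
\end{equation*}
and therefore
\begin{equation*}
\Delta^2 S^*(x)=\frac{S(x+1)-S(x+2)}{\mu}\geq 0,
\end{equation*}
where the last inequality uses the fact that $S$ is itself non-increasing (being a survival function). This establishes $2$-monotonicity of $S^*$ on $\mathbb{N}_0$, so $G$ is a bona fide bivariate Schur-constant survival function.

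Finally, the marginal of $X^*$ is obtained by evaluating at $y=0$: $P(X^*\geq x)=G(x,0)=S^*(x)$, which is precisely the equilibrium survival function (\ref{2*.7}). Equivalently, the p.m.f. is $-\Delta S^*(x)=S(x+1)/\mu=p^*(x)$, matching (\ref{2*.6}). I do not foresee a real obstacle here; the only point that requires any computation is the verification of $2$-monotonicity of $S^*$, and that follows from the monotonicity of $S$ in one line.
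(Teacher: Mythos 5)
Your proposal is correct and follows essentially the same route as the paper: check that the marginal at $y=0$ gives $S^*$, and verify $2$-monotonicity of $S^*$ by computing $\Delta S^*(x)=-S(x+1)/\mu\leq 0$ and $\Delta^2 S^*(x)=\bigl(S(x+1)-S(x+2)\bigr)/\mu=P(X=x+1)/\mu\geq 0$. The only difference is that you also spell out the normalisation $S^*(0)=1$ and non-negativity explicitly, which the paper handles in the remark following (\ref{2*.7}).
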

\begin{proof}
It is enough to put $y=0$ in expression (\ref{2*.8}) to check that $X^*$ is the equilibrium distribution of $X$. With respect to the Schur-constancy property, from (\ref{2*.8}), we obtain
\begin{equation}\label{2*.9}
\Delta S^*(x)= -\frac{S(x+1)}{\mu}\leq0,
\end{equation}
and
\begin{eqnarray}\label{2*.10}
\Delta^2 S^*(x) &=& -\frac{S(x+2)}{\mu}+\frac{S(x+1)}{\mu} = \frac{1}{\mu}\left[P(X\geq x+1)-P(X\geq x+2)\right] \nonumber\\
 &=& \frac{P(X=x+1)}{\mu} \geq 0.
\end{eqnarray}
Then, from (\ref{2*.9}) and (\ref{2*.10}), $S^*(x)$ is always $2$-monotone although $S(x)$ may not.
\end{proof}

Now we derive several useful formulas. As $(X^*, Y^*)$ is Schur-constant,
\begin{equation}\label{2*.11}
P(X^*=x, Y^*=y)=\Delta^2S^*(x+y)=\frac{P(X=x+y+1)}{\mu}, 
\end{equation}
and the p.m.f of the sum $Z=X^*+Y^*$, by (\ref{1.3}) and (\ref{2*.10}), is

\begin{equation}\label{2*.11a}
P(Z=z)=\frac{P(X=z+1)(z+1)}{\mu}.
\end{equation}
In fact, $Z$ turns out to be the (first) length-biased type transform of $X$ (as defined in Lef\`evre and Loisel, 2013).

The basic characteristics of $(X^*,Y^*)$ are available in terms of the expectations of simple functions of $X$ and this is our next proposition.

\begin{Proposition}\label{Prop.2.2}
Let $(X^*,Y^*)$ be as in Proposition \ref{Prop.2.1}. Then,
\begin{equation}\label{2*.12}
\mu^*=E(X^*)= \frac{1}{2} \left(\frac{E(X^2)}{\mu} -1\right),
\end{equation}
\begin{equation}\label{2*.13}
V(X^*)=\frac{4\mu E(X^3)-3\left(E(X^2)\right)^2-\mu^2}{12\mu^2}.
\end{equation}
\end{Proposition}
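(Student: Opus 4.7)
The plan is to evaluate $E(X^{*})$ and $E(X^{*2})$ directly from the explicit p.m.f.\ $p^{*}(x)=S(x+1)/\mu$ given in (\ref{2*.6}), and then combine them into the variance. The key ingredient is the classical set of tail-sum identities for an integer-valued nonnegative variable $X$:
\begin{equation*}
\mu=\sum_{k=1}^{\infty}S(k),\qquad E(X^{2})=\sum_{k=1}^{\infty}(2k-1)\,S(k),\qquad E(X^{3})=\sum_{k=1}^{\infty}(3k^{2}-3k+1)\,S(k),
\end{equation*}
which follow from the telescoping identities $x=\sum_{k=1}^{x}1$, $x^{2}=\sum_{k=1}^{x}(2k-1)$ and $x^{3}=\sum_{k=1}^{x}(3k^{2}-3k+1)$ together with Fubini. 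From these one immediately extracts the two auxiliary sums $\sum_{k\geq1}kS(k)=\tfrac{1}{2}(E(X^{2})+\mu)$ and $\sum_{k\geq1}k^{2}S(k)=\tfrac{1}{3}E(X^{3})+\tfrac{1}{2}E(X^{2})+\tfrac{1}{6}\mu$.

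For the mean, I would write
\begin{equation*}
E(X^{*})=\frac{1}{\mu}\sum_{x=0}^{\infty}x\,S(x+1)=\frac{1}{\mu}\sum_{y=1}^{\infty}(y-1)\,S(y)
\end{equation*}
after the shift $y=x+1$, and substitute the two sums just identified; the terms involving $\mu$ collapse to give (\ref{2*.12}). For the second moment, the same shift yields
\begin{equation*}
E(X^{*2})=\frac{1}{\mu}\sum_{y=1}^{\infty}(y-1)^{2}S(y)=\frac{1}{\mu}\Bigl[\sum_{y\geq1}y^{2}S(y)-2\sum_{y\geq1}yS(y)+\mu\Bigr],
\end{equation*}
which after substitution simplifies to $E(X^{*2})=E(X^{3})/(3\mu)-E(X^{2})/(2\mu)+1/6$.

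Finally I would form $V(X^{*})=E(X^{*2})-\bigl[E(X^{*})\bigr]^{2}$: on expanding the square the linear-in-$E(X^{2})$ contributions cancel, leaving $E(X^{3})/(3\mu)-(E(X^{2}))^{2}/(4\mu^{2})-1/12$; clearing denominators to the common $12\mu^{2}$ produces exactly (\ref{2*.13}).

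There is no genuine conceptual obstacle here; the whole thing is a bookkeeping exercise, and the main risk is arithmetic slips. The one point that warrants care is the $+1$ shift in the argument of $S$ in the definition of $p^{*}$, which makes the discrete formulas differ from their continuous analogues by the additive $-1$ in (\ref{2*.12}) and by the $-\mu^{2}$ term in the numerator of (\ref{2*.13}); getting the indices right in the initial substitution $y=x+1$ is what ties the computation to the exact form stated.
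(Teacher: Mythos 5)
Your proposal is correct and follows essentially the same route as the paper: both compute $E(X^*)$ and $E((X^*)^2)$ directly from the p.m.f.\ $p^*(x)=S(x+1)/\mu$, reduce the resulting sums to the ordinary moments $\mu$, $E(X^2)$, $E(X^3)$, and then form the variance --- the paper performs the reduction by summation by parts using the antidifferences $\Delta\bigl(\tfrac{x^2-x}{2}\bigr)=x$ and $\Delta\bigl(\tfrac{x^3}{3}-\tfrac{x^2}{2}+\tfrac{x}{6}\bigr)=x^2$, and your shift-plus-Fubini tail-sum identities are the same discrete-calculus step in different notation. In particular you recover the paper's intermediate formula $E((X^*)^2)=\tfrac{E(X^3)}{3\mu}-\tfrac{E(X^2)}{2\mu}+\tfrac{1}{6}$, and your final algebra reproduces (\ref{2*.12}) and (\ref{2*.13}) exactly.
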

\begin{proof}
Considering (\ref{2*.6}), we have
\begin{equation}\label{2*.14}
\mu^*=E(X^*)= \frac{1}{\mu}\sum_{x=0}^\infty x S(x+1).
\end{equation}
It is a simple exercise that $\Delta \left(\frac{x^2-x}{2}\right)=x$, so that we can apply summation by parts to the last sum in (\ref{2*.14}). This leads to
\begin{eqnarray}\label{2*.15}
 \sum_{x=0}^\infty x S(x+1)&=& \frac{1}{2} \left(\sum_{x=0}^\infty p(x+1) (x+1)^2- \sum_{x=0}^\infty p(x+1) (x+1)\right)\nonumber\\
&=& \frac{1}{2} \left(\sum_{x=0}^\infty p(x) x^2- \sum_{x=0}^\infty p(x) x\right).
\end{eqnarray}
Substituting this in (\ref{2*.14}), we obtain (\ref{2*.12}). Let us derive now a formula for the second ordinary moment of $X^*$,
\begin{equation}\label{2*.16}
E((X^*)^2)= \sum_{x=0}^\infty x^2 \frac{S(x+1)}{\mu}.
\end{equation}
We know that $\Delta \left(\frac{x^3}{3}-\frac{x^2}{2}+\frac{x}{6}\right)=x^2$. Considering this, summation by parts yields

\begin{equation}\label{2*.17}
E((X^*)^2)= \frac{1}{\mu}\left(\frac{E(X^3)}{3}-\frac{E(X^2)}{2}+\frac{\mu}{6}\right).
\end{equation}
By (\ref{2*.12}) and (\ref{2*.17}), we obtain formula (\ref{2*.13}).
\end{proof}

Casta\~ner et al. (2015) presented simple formulas for the Pearson correlation coefficient, $\rho$, between any two variables in a Schur-constant vector. One of them allows to compute $\rho$ in terms of the expectation and the variance of one of the marginal random variables. But in these Schur-constant discrete bivariate equilibrium distributions, it is convenient to relate $\rho$ to the main characteristics of the original variable $X$, the building block of the model.

\begin{Proposition}\label{Prop.2.3}
In terms of $E(X^*)$ and $V(X^*)$,
\begin{equation} \label{2*.171}
\rho= \frac{V(X^*)-\left(E(X^*)\right)^2-E(X^*)}{2V(X^*)},
\end{equation}
and, in terms of the ordinary moments of $X$,
\begin{equation} \label{2*.18}
\rho=\frac{2\mu E(X^3)-3\left(E(X^2)\right)^2+\mu^2}{4\mu E(X^3)-3\left(E(X^2)\right)^2-\mu^2}.
\end{equation}

\end{Proposition}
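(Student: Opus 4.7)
The plan is to prove (\ref{2*.171}) first---a relation valid for any bivariate Schur-constant vector---and then obtain (\ref{2*.18}) by plugging in the equilibrium-specific moment formulas (\ref{2*.12}) and (\ref{2*.13}).

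Formula (\ref{2*.171}) is essentially an instance of the Pearson correlation formula from Casta\~ner et al.\ (2015) referenced just before the proposition, so one option is simply to invoke that result. For completeness I would sketch the derivation. Since $X^*$ and $Y^*$ are equidistributed by Schur-constancy, $\rho = \Cov(X^*,Y^*)/V(X^*)$; setting $Z = X^* + Y^*$ gives $V(Z) = 2V(X^*)(1+\rho)$, so $\rho = V(Z)/(2V(X^*))-1$. It remains to express $V(Z)$ in terms of $E(X^*)$ and $V(X^*)$. For this I would use (\ref{Z.0}): interchanging summations and applying the closed form $\sum_{x=0}^{z} x^2 = z(z+1)(2z+1)/6$ yields $E((X^*)^2) = E(Z^2)/3 + E(Z)/6$. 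Combined with $E(X^*) = E(Z)/2$ (obtained analogously from (\ref{Z.0})), this rearranges to the general identity
\[
V(Z) = 3V(X^*) - E(X^*) - (E(X^*))^2,
\]
and substituting it back into $\rho = V(Z)/(2V(X^*))-1$ produces (\ref{2*.171}).

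For (\ref{2*.18}), the remaining work is purely algebraic: substitute $E(X^*)$ from (\ref{2*.12}) and $V(X^*)$ from (\ref{2*.13}) into (\ref{2*.171}). To streamline the computation, I would first note the intermediate simplification
\[
E(X^*) + (E(X^*))^2 = \frac{(E(X^2))^2 - \mu^2}{4\mu^2},
\]
which follows from (\ref{2*.12}) after factoring $(E(X^2)-\mu)(E(X^2)+\mu)$. With this in hand, the numerator of (\ref{2*.171}) reduces to $(2\mu E(X^3) + \mu^2 - 3(E(X^2))^2)/(6\mu^2)$ and the denominator to $(4\mu E(X^3) - 3(E(X^2))^2 - \mu^2)/(6\mu^2)$; cancelling the common factor $1/(6\mu^2)$ gives exactly (\ref{2*.18}). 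The main obstacle is nothing more than careful bookkeeping of the algebra; no new ideas beyond those already in the excerpt are required.
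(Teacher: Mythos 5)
Your proposal is correct and follows essentially the same route as the paper: formula (\ref{2*.171}) is invoked from Proposition 6.2 of Casta\~ner et al.\ (2015), and (\ref{2*.18}) is obtained by substituting (\ref{2*.12}) and (\ref{2*.13}) into it, with your algebraic simplifications (including the intermediate identity for $E(X^*)+(E(X^*))^2$) checking out. The self-contained derivation of (\ref{2*.171}) via (\ref{Z.0}) and $\rho = V(Z)/(2V(X^*))-1$ is a correct extra that the paper leaves to the cited reference.
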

\begin{proof} Formula (\ref{2*.171}) is direct from Proposition 6.2. in Casta\~ner et al. (2015). Let us now establish (\ref{2*.18}). We could evaluate $E(X^*Y^*)$ directly using (\ref{2*.11}). However, a simpler method consists in using  (\ref{2*.171}) in which $E(X^*)$  is substituted by (\ref{2*.12}) and $V(X^*)$  is substituted by (\ref{2*.13}). Simple algebra yields the desired formula.
\end{proof}

The corresponding formulas (\ref{2*.12}), (\ref{2*.13}) and (\ref{2*.18}) for continuous random variables can be found in Nair and Sankaran (2014).

Now, we apply these results to two specific models generated by  well-known distributions, the Poisson one and the geometric one.

\subsection{Poisson Schur-constant bivariate equilibrium model}
Let us consider that the  Schur-constant equilibrium distribution model $(X_1,X_2)$ is generated by a r.v. $X$ that follows a Poisson distribution with parameter $\lambda$; i.e., $P(X=k)=e^{-\lambda}\frac{\lambda^{k}}{k!}$. This model is an interesting and particular one, because of its properties. Using (\ref{2*.11a}), it is immediate to conclude that $Z=X_1+X_2$ also follows a Poisson distribution of parameter $\lambda$, 
\begin{equation} \label{2*.19}
P(Z=z)=\frac{\lambda^{z+1}e^{-\lambda}(z+1)}{\lambda (z+1)!}=\frac{\lambda^{z}e^{-\lambda}}{z!},\text{ \ \ }    z=0,1,\ldots.
\end{equation}

\begin{Proposition}\label{prop:PZPoisson}
Let  $(X_1,X_2)$ be a Schur-constant equilibrium distribution model generated by a r.v. $X$ and $Z=X_1+X_2$. Then, $Z=_d X$ if and only if $X$ follows a Poisson distribution.
\end{Proposition}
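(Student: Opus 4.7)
The plan is to use formula (\ref{2*.11a}) as the key relation, which already makes the ``if'' direction essentially free (it has been displayed in (\ref{2*.19}) just before the proposition), and to exploit it in reverse for the ``only if'' direction by turning it into a first-order recursion on the p.m.f. of $X$.

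First, I would dispose of the sufficiency direction by quoting (or redoing in one line) the computation in (\ref{2*.19}): if $X$ is Poisson$(\lambda)$, then $\mu=\lambda$ and (\ref{2*.11a}) gives $P(Z=z)=e^{-\lambda}\lambda^{z}/z!$, so $Z=_d X$. For the necessity direction, I would assume $Z=_d X$ and substitute $P(Z=z)=P(X=z)$ into (\ref{2*.11a}) to get the recurrence
\begin{equation*}
P(X=z+1)=\frac{\mu}{z+1}\,P(X=z),\qquad z\geq 0,
\end{equation*}
where $\mu=E(X)$ is, at this stage, still an unknown positive constant depending on $X$.

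Iterating the recurrence from $z=0$ yields $P(X=k)=\frac{\mu^k}{k!}P(X=0)$ for every $k\in\mathbb{N}_0$. Imposing the normalization $\sum_{k=0}^\infty P(X=k)=1$ forces $P(X=0)\cdot e^{\mu}=1$, hence $P(X=0)=e^{-\mu}$ and therefore $P(X=k)=e^{-\mu}\mu^{k}/k!$, i.e.\ $X$ is Poisson with parameter $\lambda=\mu$.

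There is really no serious obstacle here; the only small point to be careful about is that $\mu=E(X)$ must be finite for the model (\ref{2*.7})--(\ref{2*.8}) to be well-defined, so the recurrence does produce a genuine probability distribution, and solving it together with the normalization condition pins down both the functional form (Poisson) and the parameter ($\lambda=\mu$). The argument therefore reduces to recognizing (\ref{2*.11a}) as the well-known characterization that the length-biased transform of $X$ (shifted by one) returns $X$ itself precisely for the Poisson family.
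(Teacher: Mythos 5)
Your proof is correct, and it hinges on the same pivot as the paper's: the identity (\ref{2*.11a}) linking the p.m.f.\ of $Z$ to that of $X$. The difference is in how the Poisson characterization is closed. The paper invokes Panjer's (1981) result that the Poisson law is the \emph{unique} distribution satisfying $P(X=k+1)=P(X=k)\,b/(k+1)$: it substitutes that recursion into (\ref{2*.11a}) to obtain the ``if'' direction, and then settles the converse with a one-line remark that actually starts from ``$Z$ is Poisson'' rather than from $Z=_d X$ (with the identification of $\mu$ and $\lambda$ left implicit). You instead prove the needed uniqueness by hand: assuming $Z=_d X$, you read (\ref{2*.11a}) as the recursion $P(X=z+1)=\mu\,P(X=z)/(z+1)$, iterate it to $P(X=k)=\mu^{k}P(X=0)/k!$, and normalize to force $P(X=0)=e^{-\mu}$, hence $X\sim$ Poisson$(\mu)$. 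This buys a self-contained, elementary argument that does not outsource the uniqueness step to the Panjer citation, and your converse is logically tidier because it addresses exactly the implication $Z=_d X \Rightarrow X$ Poisson as stated in the proposition; the paper's route is shorter on the page but leans on the external characterization and a looser phrasing of the reverse implication. The only hypotheses you use, $0<\mu<\infty$, are already required for the equilibrium model (\ref{2*.7})--(\ref{2*.8}) to be defined, as you correctly note.
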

\begin{proof}  It is well known from Panjer (1981) that the Poisson distribution is the only one that fulfills a recursive equation on its probabilities of the form
\begin{equation} \label{2*.191}
P(X=k+1)=P(X=k) \frac{b}{k+1},\text{ \ \ } k=0,1,2,\ldots ,  
\end{equation}
being $b=\lambda$. Then, substituting (\ref{2*.191}) in (\ref{2*.11a}), we obtain
\begin{equation} \label{2*.20}
P(Z=z)=P(X=z),\text{ \ \ }    z=0,1,2,\ldots. 
\end{equation}
For the other implication, if $Z$ is Poisson distributed with parameter $\lambda$, from (\ref{2*.11a}), $P(X=z+1)=\frac{P(Z=z)\lambda}{z+1}$, which implies that $X$ is Poisson distributed with the same parameter. 
\end{proof}

The distribution of $X_1$ can be found using the property that $X_1$ is the equilibrium distribution of $X$. Since $X$ is Poisson distributed, $S(x)= \sum_{h=x}^\infty \frac{e^{-\lambda}\lambda^h}{h!}$ and then

\begin{equation} \label{2*.21}
P(X_1=x)=\frac{1-\frac{\Gamma(x+1,\lambda)}{x!}}{\lambda} , \text{ \ \ }  x=0, 1, \ldots,
\end{equation}
being $\Gamma(\cdot\,,\cdot)$ the incomplete Gamma function.  The same result can be obtained using (\ref{Z.1}) and Proposition  \ref{prop:PZPoisson}. The correlation coefficient between $X_1$ and $X_2$ in this model, computed using (\ref{2*.18}), is $\rho=\frac{-\lambda}{6+\lambda}$.

\subsection{Geometric Schur-constant bivariate equilibrium model}
Let us consider a geometric r.v. $X$ with survival function $S(x)=q^x$. From this survival function we build a Schur-constant equilibrium vector $(X_1, X_2)$. The r.v. $X_1$ is the equilibrium distribution of $X$. It immediately follows that $S^*(x)=S(x)=q^x$. Then, $X_1$ and $X_2$ are independent r.v. (Casta\~ner et al., 2015). So, the Pearson correlation coefficient $\rho$ computed from (\ref{2*.171}) is equal to $0$. 

\section{Schur-constant multivariate equilibrium distribution model}
In this section we generalize the results of the previous section. The first order equilibrium distribution of a discrete random variable $X$ has been used in Section 2 to define a special class of bivariate Schur-constant models. In the present section, this analysis is extended to the general multivariate case through the use of the $n-th$ order equilibrium distribution.
 
Let $X$ be a non negative discrete random variable with finite mean $\mu$ and survival function $S(x)$. The second order equilibrium distribution can be defined  by its survival function 
\begin{equation}\label{4.1}
S^{2*}(x)=\sum_{h=x+1}^\infty \frac{S^*(h)}{\mu_{1:1}}, \text{ for }x\in \mathbb{N}_0,
\end{equation}
or, alternatively, by its p.m.f 
\begin{equation}\label{4.2}
p^{2*}(x)=\frac{S^*(x+1)}{\mu_{1:1}}, \text{ for }x\in \mathbb{N}_0,
\end{equation}
being $\mu_{1:1}$ the first order moment of $X^*$ (the first order equilibrium distribution of $X$). Likewise, we can define recursively the $n-th$ order equilibrium distribution,
\begin{equation}\label{4.3}
S^{n*}(x)=\sum_{h=x+1}^\infty \frac{S^{(n-1)*}(h)}{\mu_{n-1:1} }, \text{ for }x\in \mathbb{N}_0,
\end{equation}
and
\begin{equation}\label{4.4}
p^{n*}(x)=\frac{S^{(n-1)*}(x+1)}{\mu_{n-1:1}}, \text{ for }x\in \mathbb{N}_0,
\end{equation}
being $\mu_{i:1}<\infty$ the first order moment of the $i-th$ order equilibrium distribution of $X$, with $\mu_{0:1}=\mu$, and $S^{(0)*}(x)=S(x)$.

Following the same procedure than in Section 2, lets us define the random vector $(X_1, \ldots, X_n)$ with survival function 
\begin{equation}\label{4.4a}
P(X_1 \geq x_1, \ldots, X_n \geq x_n)= S^{(n-1)*}(x_1+ \ldots + x_n).
\end{equation}

Our next proposition is the parallel of Proposition \ref{Prop.2.1} in the bivariate case. Now, we prove that the survival function (\ref{4.4a}) corresponds to a Schur-constant survival function.
\begin{Proposition}\label{Prop.A}
The vector $(X_1, \ldots, X_n)$ with survival function  (\ref{4.4a}) is Schur-constant and the marginal $X_1$ is the $(n-1)-th$ order equilibrium distribution of $X$ ((\ref{4.3}) and (\ref{4.4})).
\end{Proposition}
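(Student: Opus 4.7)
The statement splits cleanly into two parts. The marginal claim is immediate: setting $x_2 = \cdots = x_n = 0$ in (\ref{4.4a}) gives $P(X_1 \geq x_1) = S^{(n-1)*}(x_1)$, which by (\ref{4.3}) is the survival function of the $(n-1)$-th order equilibrium distribution of $X$.

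For Schur-constancy, expression (\ref{4.4a}) already depends on $(x_1,\ldots,x_n)$ only through their sum, so the only real content is to verify that $S^{(n-1)*}$ is $n$-monotone on $\mathbb{N}_0$ in the sense of (\ref{Intro.2}). My plan is to iterate the one-step identity
$$\Delta S^{(k)*}(x) = -\frac{S^{(k-1)*}(x+1)}{\mu_{k-1:1}}, \qquad k \geq 1,$$
which follows at once from (\ref{4.3}), with the conventions $S^{(0)*}=S$ and $\mu_{0:1}=\mu$. A short induction on $j$ then yields, for $0 \leq j \leq n-1$,
$$(-1)^j \Delta^j S^{(n-1)*}(x) = \frac{S^{(n-1-j)*}(x+j)}{\prod_{i=1}^{j} \mu_{n-1-i:1}},$$
and the right-hand side is non-negative since it is a positive multiple of a survival function. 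One additional application of $\Delta$ then handles the endpoint case $j=n$: taking $j=n-1$ in the displayed identity and differencing once more gives
$$(-1)^n \Delta^n S^{(n-1)*}(x) = \frac{P(X = x+n-1)}{\prod_{i=1}^{n-1} \mu_{n-1-i:1}} \geq 0,$$
where I used $-\Delta S(x+n-1) = P(X=x+n-1)$. Combined with $S^{(n-1)*}(0)=1$, this delivers exactly the $n$-monotonicity required by (\ref{Intro.2}).

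The hard part is purely bookkeeping: keeping straight which order of equilibrium distribution and which normalizing constant $\mu_{\cdot:1}$ is picked up at each application of $\Delta$. Conceptually, the computation just records the fact that the discrete stationary-excess operator raises the degree of monotonicity of a survival function by one, so iterating it $n-1$ times starting from a mean-finite $X$ produces an $n$-monotone function---precisely the admissibility condition needed to lift Proposition \ref{Prop.2.1} from the bivariate to the general $n$-variate setting.
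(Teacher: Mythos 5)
Your proof is correct, and it is a mild reorganization of the paper's argument rather than the same write-up. The paper proceeds by induction on $n$: it takes the $2$-monotonicity of $S^{*}$ from Proposition \ref{Prop.2.1} as the base case and, assuming $S^{(n-2)*}$ is $(n-1)$-monotone, uses the one-step relation $\Delta S^{(n-1)*}(x)=-S^{(n-2)*}(x+1)/\mu_{n-2:1}$ coming from (\ref{4.3}) to push the monotonicity up one order (displaying only the top-order difference, as in (\ref{4.4b})). You instead iterate that same one-step relation to get a closed form for \emph{every} iterated difference, $(-1)^{j}\Delta^{j}S^{(n-1)*}(x)=S^{(n-1-j)*}(x+j)\big/\prod_{i=1}^{j}\mu_{n-1-i:1}$ for $0\leq j\leq n-1$, together with $(-1)^{n}\Delta^{n}S^{(n-1)*}(x)=P(X=x+n-1)\big/\prod_{i=1}^{n-1}\mu_{n-1-i:1}$. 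The underlying idea is identical (the stationary-excess operator raises the monotonicity degree by one), but your version makes all the inequalities $j=0,\ldots,n$ required by (\ref{Intro.2}) explicit instead of leaving the lower orders implicit in the induction, and your top-order formula is exactly (\ref{4.16}), which the paper derives separately in the proof of Proposition \ref{Prop.D}; so your route yields that later result as a free byproduct. The identification of the marginal by setting $x_2=\cdots=x_n=0$ is the same in both arguments, and your remark that $S^{(n-1)*}(0)=1$ (needed for (\ref{4.4a}) to be a genuine survival function) is a point the paper passes over silently.
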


\begin{proof}
We know from Proposition \ref{Prop.2.1} that it is true for $n=2$, because $S^*(x)$ is $2$-monotone. We assume now that it is true for $n-1$, i.e., $S^{(n-2)*}$ is $(n-1)$-monotone. From (\ref{4.3}),
\begin{equation}\label{4.4b}
(-1)^{n} \Delta^{n}S^{(n-1)*}(x)=(-1)^{n} \Delta^{(n-1)} \Delta S^{(n-1)*}(x)= (-1)^{(n-1)}\Delta^{(n-1)} \frac{S^{(n-2)*}(x+1)}{\mu_{n-2:1}}.
\end{equation}
Since $S^{(n-2)*}$ is $(n-1)$-monotone, expression (\ref{4.4b}) is equal or greater than zero, which in turn implies that  $S^{(n-1)*}$ is $n$-monotone. Putting $x_2=\ldots = x_n =0$ in (\ref{4.4a}), $P(X_1 \geq x_1)= S^{(n-1)*}(x_1)$, and thus, the marginal distribution $X_1$ is the $(n-1)-th$ order equilibrium distribution of $X$. 
\end{proof}

A Schur-constant vector of $n$ components can be defined using a survival function not necessarily $n$-monotone, using the previous proposition; we say then that such vector follows a \emph{Schur-constant multivariate equilibrium distribution model}. By exchangeability, all of the $X_i$'s have the same ordinary moments, and the same Pearson correlation coefficient. In this model we are able to obtain all of them from the ordinary moments of the original distribution $X$, as long as $X_i$ is the $(n-1)-th$ order equilibrium distribution of $X$. Let us first establish the following lemma.

\begin{Lemma}\label{l.2}
The set of discrete functions $F(x)$ such that $\Delta F(x)=x^{n-1}$ is given by $P_n(x)+C$, being $C$ a constant and $P_n(x)$ a polynomial of degree $n$, such that $P_n(x)=\sum_{r=0}^n a_r(n) x^r$ with $a_0(n)=0$ and $a_r(n)$, for $r=1,\ldots,n$, that can be computed recursively,
\begin{eqnarray}\label{4.5}
a_n(n) &=& \frac{1}{n}, \nonumber\\
a_{n-h+1}(n)&=& \frac{-1}{n-h+1}\sum_{r=n-h+2}^n {r\choose n-h} a_r(n), \text{ \ \ } h=2, \ldots , n.
\end{eqnarray}

\end{Lemma}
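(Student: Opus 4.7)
The plan is to split the statement into two parts: a uniqueness claim (any two discrete antiderivatives of $x^{n-1}$ with respect to $\Delta$ differ by an additive constant) and an existence claim (the polynomial $P_n$ defined by the given recursion really does satisfy $\Delta P_n(x)=x^{n-1}$). Once both are established, the full solution set must be $\{P_n(x)+C:C\in\mathbb{R}\}$.

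The uniqueness step is immediate. If $G\colon\mathbb{N}_0\to\mathbb{R}$ satisfies $\Delta G(x)=0$ for every $x$, then $G(x+1)=G(x)$, so $G$ is constant on $\mathbb{N}_0$. Hence once one polynomial solution is exhibited, every other solution is obtained by adding a constant. The normalization $a_0(n)=0$ costs nothing because the constant term of any polynomial antiderivative is absorbed into the free parameter $C$.

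For existence, I would look for $P_n$ of the form $P_n(x)=\sum_{r=1}^n a_r(n)\,x^r$ and determine its coefficients by the method of undetermined coefficients. Using the binomial expansion of $(x+1)^r$ and interchanging the order of summation,
$$
\Delta P_n(x)=\sum_{r=1}^n a_r(n)\sum_{k=0}^{r-1}\binom{r}{k}x^k=\sum_{k=0}^{n-1}\Bigl(\sum_{r=k+1}^n\binom{r}{k}\,a_r(n)\Bigr)x^k.
$$
Setting this equal to $x^{n-1}$ and matching coefficients of $x^k$ for $k=0,1,\ldots,n-1$ yields a triangular linear system in the unknowns $a_1(n),\ldots,a_n(n)$. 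The top equation, coming from $k=n-1$, reads $n\,a_n(n)=1$ and fixes $a_n(n)=1/n$. For $k=n-h$ with $h=2,\ldots,n$, the equation is $\sum_{r=n-h+1}^n\binom{r}{n-h}\,a_r(n)=0$; isolating the lowest-indexed coefficient and using $\binom{n-h+1}{n-h}=n-h+1$ gives exactly the recursion~(\ref{4.5}).

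No real obstacle is expected here: the system is strictly triangular, so the recursion determines $a_n(n),a_{n-1}(n),\ldots,a_1(n)$ uniquely from the top down. The only item that needs care is book-keeping of indices, and in particular that the last step $h=n$ pins down $a_1(n)$ consistently with the vanishing constant term on the right-hand side $x^{n-1}$, which is guaranteed by the choice $a_0(n)=0$. A quick sanity check in low dimensions (e.g. $n=2$ recovers $P_2(x)=\tfrac{x^2-x}{2}$ and $n=3$ recovers $P_3(x)=\tfrac{x^3}{3}-\tfrac{x^2}{2}+\tfrac{x}{6}$, both used explicitly in Proposition~\ref{Prop.2.2}) confirms the formula.
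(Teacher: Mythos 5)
Your proposal is correct and follows essentially the same route as the paper: expand $\Delta P_n(x)$ via the binomial theorem, swap the order of summation, and match coefficients of $x^k$ to obtain the triangular system whose back-substitution is exactly the recursion (\ref{4.5}). The only (welcome) addition is that you make the uniqueness-up-to-a-constant part explicit via $\Delta G=0\Rightarrow G$ constant, a point the paper's proof leaves implicit.
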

\begin{proof}
Let us consider $F(x)=P_n(x)+C$ being $P_n(x)= \sum_{r=0}^n a_r(n) x^r$ whit $a_0(n)=0$, a polynomial of degree $n$ on $x$. The difference of any of these functions is exactly the same and equals
\begin{eqnarray}\label{4.6}
\Delta P_n(x) &=& \sum_{r=0}^n a_r(n) \Delta x^r = \sum_{r=0}^n a_r(n) \left(\sum_{s=0}^r {r \choose s}x^s -x^r\right) \nonumber\\
&=& \sum_{r=0}^{n-1} a_r(n) \sum_{s=0}^r  {r+1 \choose s} x^s = \sum_{s=0}^{n-1} x^s \sum_{r=s}^{n-1}  {r+1 \choose s} a_{r+1}(n).
\end{eqnarray}
From (\ref{4.6}), we observe that $\Delta P_n(x)$ is a new polynomial of degree $n-1$ and coefficients $b_s$, that can be derived from the coefficients of the original polynomial, $a_r(n)$,
\begin{equation}\label{4.7}
b_s= \sum _{r=s+1}^n {r \choose s} a_r(n), \text{ \ \ } s=0, \ldots, n-1.
\end{equation}
From (\ref{4.6}) and (\ref{4.7}) and imposing the condition that $\Delta P_n(x)=x^{n-1}$, that is $b_{n-1}=1$, and $b_s=0$ for $s=0,\ldots, n-2$, a linear system of $n$ equations on $a_r(n)$ is defined. The value of $a_n=\frac{1}{n}$ is obtained when $s=n-1$. When $s=n-h$, the corresponding equation is 
\begin{equation}\label{4.8}
0=\sum _{r=n-h+1}^n {r \choose n-h} a_r(n).
\end{equation}
 Isolating $a_{n-h+1}(n)$ from (\ref{4.8}) the statement of the lemma is obtained. 
\end{proof}

Table \ref{tab:Table 3} below gives the values of $a_r(n)$, for the first values of $n$. 
\begin{table}[H]
\caption{Some coefficients $a_r(n)$}
\label{tab:Table 3} \centering
\vspace{3mm}
\begin{tabular}{c|rrrrrrrrr}
\hline
\backslashbox{$r$}{$n$} & $2$ & $3$ & $4$ & $5$ & $6$ & $7$ & $8$ & $9$ & $10$\\ \hline  
$1$ & ${-1}/{2}$ & ${1}/{6}$ & $0$ & ${-1}/{30}$ & $0$ & ${1}/{42}$ & $0$ & ${-1}/{30}$ & $0$ \\  
$2$ & ${1}/{2}$ & ${-1}/{2}$ & ${1}/{4}$ & $0$ & ${-1}/{12}$ & $0$ & ${1}/{12}$ & $0$ & ${-3}/{20}$ \\ 
$3$ &  & ${1}/{3}$& ${-1}/{2}$ &${1}/{3}$& $0$ & ${-1}/{6}$ & $0$ & ${2}/{9}$ & $0$ \\ 
$4$ & &  & ${1}/{4}$ &${-1}/{2}$ & ${5}/{12}$ &$0$ & ${-7}/{24}$ & $0$ & ${1}/{2}$ \\ 
$5$ &  & & &${1}/{5}$ & ${-1}/{2}$ & ${1}/{2}$ & $0$ & ${-7}/{15}$ & $0$ \\ 
$6$ &  &  & & & ${1}/{6}$ & ${-1}/{2}$ & ${7}/{12}$ & $0$ & ${-7}/{10}$ \\ 
$7$ & &  & & &  & ${1}/{7}$ & ${-1}/{2}$ & ${2}/{3}$ & $0$ \\ 
$8$ &  &  &  & &  & & ${1}/{8}$ & ${-1}/{2}$ & ${3}/{4}$ \\ 
$9$ &  &  &  & &  & &  & ${1}/{9}$ & ${-1}/{2}$ \\  
$10$ & & & & & &  & & & ${1}/{10}$ \\ 
\hline
\end{tabular}
\end{table}

\begin{Proposition}\label{Prop.B} The $j-th$ ordinary moment of the $i-th$ order equilibrium distribution, $\mu_{i:j}=E\left[\left(X^{i*}\right)^j\right]$, $j=1,2,...$, $i=1, 2,...$, is given by the recurrence formula
\begin{equation}\label{4.9}
\mu_{i:j}= \frac{1}{\mu_{i-1:1}}\sum_{r=1}^{j+1} \mu_{i-1:r}a_r(j+1),
\end{equation}
with $a_r(\cdot)$ as in Lemma \ref{l.2}, and $\mu_{0:r}=E(X^r)$.
\end{Proposition}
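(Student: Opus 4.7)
The plan is to replicate the summation-by-parts trick used in the proof of Proposition~2.2, but now with the polynomial primitive of $x^j$ supplied by Lemma~\ref{l.2}.

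First I would write out the definition
\begin{equation*}
\mu_{i:j} \;=\; \sum_{x=0}^{\infty} x^{j}\, p^{i*}(x) \;=\; \frac{1}{\mu_{i-1:1}}\sum_{x=0}^{\infty} x^{j}\, S^{(i-1)*}(x+1),
\end{equation*}
using (\ref{4.4}). By Lemma~\ref{l.2}, there is a polynomial $P_{j+1}(x)=\sum_{r=1}^{j+1} a_r(j+1)\,x^{r}$ with $P_{j+1}(0)=a_0(j+1)=0$ and $\Delta P_{j+1}(x)=x^{j}$, so the inner sum can be rewritten as
\begin{equation*}
\sum_{x=0}^{\infty} \bigl(\Delta P_{j+1}(x)\bigr)\, S^{(i-1)*}(x+1).
\end{equation*}

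Next I would apply discrete summation by parts in the form $\sum_{x=0}^{N}(\Delta u)(x)\,v(x) = u(N{+}1)v(N{+}1)-u(0)v(0) - \sum_{x=0}^{N} u(x{+}1)\,\Delta v(x)$, taking $u=P_{j+1}$ and $v(x)=S^{(i-1)*}(x+1)$. The boundary term at $0$ vanishes because $P_{j+1}(0)=0$, and the boundary term at infinity vanishes because the hypothesis that $\mu_{i-1:r}$ is finite for $r\le j+1$ forces $N^{j+1}\,S^{(i-1)*}(N)\to 0$. Since $\Delta v(x)=S^{(i-1)*}(x+2)-S^{(i-1)*}(x+1)=-p^{(i-1)*}(x+1)$, the identity collapses to
\begin{equation*}
\sum_{x=0}^{\infty} x^{j}\, S^{(i-1)*}(x+1) \;=\; \sum_{y=0}^{\infty} P_{j+1}(y)\, p^{(i-1)*}(y) \;=\; E\bigl[P_{j+1}(X^{(i-1)*})\bigr],
\end{equation*}
where I absorbed the shift $x\mapsto y=x+1$ and used $P_{j+1}(0)=0$ to extend the range back to $y=0$.

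Finally I would expand the expectation in ordinary moments,
\begin{equation*}
E\bigl[P_{j+1}(X^{(i-1)*})\bigr] \;=\; \sum_{r=1}^{j+1} a_r(j+1)\, E\!\bigl[(X^{(i-1)*})^{r}\bigr] \;=\; \sum_{r=1}^{j+1} a_r(j+1)\, \mu_{i-1:r},
\end{equation*}
and divide by $\mu_{i-1:1}$ to obtain (\ref{4.9}). The only delicate step is the justification of the vanishing boundary term at infinity; everything else is a mechanical application of Lemma~\ref{l.2}, and is the same maneuver that produced (\ref{2*.15}) and (\ref{2*.17}) for the special cases $(i,j)=(1,1)$ and $(1,2)$.
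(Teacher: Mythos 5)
Your proposal is correct and follows essentially the same route as the paper: write $\mu_{i:j}$ via (\ref{4.10}), use the polynomial primitive of $x^j$ from Lemma \ref{l.2}, and apply summation by parts to land on $\sum_{r=1}^{j+1}a_r(j+1)\,\mu_{i-1:r}$, divided by $\mu_{i-1:1}$. Your only addition is the explicit justification of the vanishing boundary terms (via $P_{j+1}(0)=0$ and finiteness of the $(j+1)$-th moment), a detail the paper leaves implicit.
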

\begin{proof} By definition,

\begin{equation}\label{4.10}
\mu_{i:j}= \frac{1}{\mu_{i-1:1}}\sum_{x=0}^{\infty} x^j S^{(i-1)*}(x+1).
\end{equation}
From Lemma \ref{l.2}, $\Delta\left(\sum_{r=1}^{j+1} a_r(j+1) x^r\right)=x^j$, where the coefficients $a_r(\cdot)$ are given by expression (\ref{4.5}). Therefore, the sum in (\ref{4.10}) can be computed by applying summation by parts,
\begin{equation}\label{4.11}
\sum_{x=0}^{\infty} p^{(i-1)*}(x) \sum_{r=1}^{j+1}a_r(j+1) x^r = \sum_{r=1}^{j+1}a_r(j+1) \sum_{x=0}^{\infty} p^{(i-1)*}(x) x^r.
\end{equation}
\end{proof}
In Li (2011) an equivalent expression relating the $n-th$ factorial moments is found.

We focus now on the study of the Pearson correlation coefficient, $\rho$, in this Schur-constant multivariate equilibrium model. We see that $\rho$ is a function of some ordinary moments of $X$.
\begin{Proposition} \label{Prop.C}Consider the vector $(X_1, \ldots, X_n)$ with survival function (\ref{4.4a}). In terms of expectation and variance of the $(n-1)$-th order equilibrium distribution of $X$, the Pearson correlation coefficient is given by
\begin{equation} \label{4.12}
\rho= \frac{V(X^{(n-1)*})-\left(E\left(X^{(n-1)*}\right)\right)^2-E(X^{(n-1)*})}{2V(X^{(n-1)*})}.
\end{equation}

\end{Proposition}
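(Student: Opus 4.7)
The plan is to reduce the $n$-variate computation to the bivariate identity~(\ref{2*.171}) already proved in Proposition~\ref{Prop.2.3}. Since $(X_1,\ldots,X_n)$ is Schur-constant and hence exchangeable, all pairwise correlation coefficients coincide, so it is enough to evaluate $\rho(X_1,X_2)$.

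First I would observe that any two-component marginal of a Schur-constant vector is again Schur-constant with the same generator. Setting $x_3=\cdots=x_n=0$ in~(\ref{4.4a}) yields
\begin{equation*}
P(X_1\geq x_1,X_2\geq x_2)=S^{(n-1)*}(x_1+x_2),
\end{equation*}
so $(X_1,X_2)$ is itself a bivariate Schur-constant vector with generator $S^{(n-1)*}$. Admissibility in dimension two is immediate: Proposition~\ref{Prop.A} establishes $n$-monotonicity of $S^{(n-1)*}$, which entails $2$-monotonicity. By the same proposition, the common marginal of this pair is the $(n-1)$-th order equilibrium distribution $X^{(n-1)*}$ of $X$.

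Next I would apply formula~(\ref{2*.171}) to the bivariate vector $(X_1,X_2)$ with $X^{(n-1)*}$ playing the role of $X^*$, which delivers~(\ref{4.12}) directly. The main conceptual point to verify, and the one I would flag as the minor obstacle, is that~(\ref{2*.171}) is a structural identity valid for every bivariate Schur-constant vector expressed solely in terms of its common marginal (via Proposition~6.2 of Casta\~ner et al.\ (2015)), and is not tied to the particular first-order equilibrium construction used in Section~2. Once this is acknowledged, the substitution is legitimate and the argument is complete.
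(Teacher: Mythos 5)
Your proposal is correct and rests on the same key fact as the paper: the paper's proof is simply a direct citation of Proposition~6.2 in Casta\~ner et al.\ (2015), the very identity underlying~(\ref{2*.171}). Your extra step --- setting $x_3=\cdots=x_n=0$ to see that $(X_1,X_2)$ is bivariate Schur-constant with generator $S^{(n-1)*}$ and common marginal $X^{(n-1)*}$, then invoking the bivariate formula --- is a sound and slightly more explicit way of justifying the same application, so the argument goes through.
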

\begin{proof} 
It is direct from Proposition 6.2. in Casta\~ner et al. (2015).\end{proof}

Thanks to (\ref{4.9}), an equivalent expression of $\rho$ as a function of the ordinary moments of $X$ can be deduced. For instance, let us make explicit the formula when $n=3$ and $n=4$. Let $(X_1,X_2,X_3)$ be a Schur-constant multivariate equilibrium distribution model defined by $X$. The Pearson correlation coefficient can be computed as
\begin{equation}\label{4.13}
\rho =\frac{1}{2}-\frac{\left(\mu-E(X^3)\right)\left(2\mu-3E(X^2)+E(X^3)\right)}{2\mu^2+2(E(X^3))^2+3·E(X^2)(E(X^2)-E(X^4))+\mu\left(-3E(X^2)-4E(X^3)+3E(X^4)\right)}   .\nonumber
\end{equation}
If we consider now the Schur-constant multivariate equilibrium distribution model $(X_1,X_2,X_3, X_4)$,
\begin{equation}\label{4.14}
\rho =\frac{1}{2}-\frac{5\left(6\mu-11E(X^2)+6E(X^3)-E(X^4)\right)\left(2\mu-E(X^2)-2E(X^3)+E(X^4)\right)}{2(D-4\mu J )}, \nonumber
\end{equation}
being
$$
D=36\mu^2+65(E(X^2))^2+20(E(X^3))^2-70E(X^2)E(X^4)+5(E(X^4))^2+24E(X^2)E(X^5)8E(X^3)E(X^5),
$$
and
$$
J=21E(X^2)+8E(X^3)\!\!-\!\!15E(X^4)+4E(X^5).
$$

As an example, Tables \ref{tab:Table ro}  and \ref{tab:Table ro2} include the values of the Pearson correlation coefficient in a Schur-constant multivariate equilibrium distribution model built from a Poisson distributed random variable of mean $\lambda$, as a function of $n$, the number of elements of the random vector.

\begin{table}[H]
\caption{$\rho$ in a Poisson Schur-constant multivariate equilibrium model $(X_1, \ldots, X_n)$ }
\label{tab:Table ro} \centering
\vspace{3mm}
\begin{tabular}{c|cccc}
\hline
$n$ & 2 & 3 & 4 & 5\\ \hline
\\
$\rho$ & $\displaystyle\frac{-\lambda}{6+\lambda}$ & $\displaystyle\frac{-\lambda}{12+2\lambda}$ &  $\displaystyle\frac{-\lambda}{20+3\lambda}$ & $\displaystyle\frac{-\lambda}{30+4\lambda}$\\ \\
\hline%
\end{tabular}
\end{table}

\begin{table}[H]
\caption{$\rho$ in a Poisson Schur-constant multivariate equilibrium model $(X_1, \ldots, X_n)$ for different values of $\lambda$ }
\label{tab:Table ro2} \centering
\vspace{3mm}
\begin{tabular}{c|cccccc}
\hline
\backslashbox{$n$}{$\lambda$} & $0.01$ & $0.5$ & $1$ & $5$ & $10$ & $100$\\ \hline
\\

$2$ & $-0.00166$ & $-0.07692$ & $-0.14286$ & $-0.45455$ & $-0.62500$ & $-0.94340$  \\ \\

$3$ & $-0.00083$ & $-0.03846$ & $-0.07143$ & $-0.22727$ & $-0.31250$ & $-0.47170$ \\ \\

$4$ & $-0.00050$ & $-0.02326$ & $-0.04348$ & $-0.14286$ & $-0.20000$ & $-0.31250$ \\ \\

$5$ & $-0.00033$ & $-0.01563$ & $-0.02941$ & $-0.10000$ & $-0.14286$ & $-0.23256$  \\ \\
\hline%
\end{tabular}
\end{table}

One interesting property of these models is that the probability mass function of $Z=X_1+\ldots + X_n$ is easily calculated from the survival function of $X$. This is our next proposition.
\begin{Proposition} \label{Prop.D}Consider the vector $(X_1, \ldots, X_n)$ with survival function (\ref{4.4a}). The p.m.f. of $Z=X_1+ \ldots + X_n$ is
\begin{equation}\label{4.15}
P(Z=z)= \frac{P(X=z+n-1)}{\mu\cdot \mu_{1:1} \cdots \mu_{(n-2):1}} {z+n-1 \choose n-1}.
\end{equation}

\end{Proposition}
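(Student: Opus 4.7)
The plan is to combine the general Schur-constant identity (\ref{1.2}) with the telescoping structure of the iterated equilibrium operator. Since, by Proposition \ref{Prop.A}, the vector $(X_1,\ldots,X_n)$ is Schur-constant with survival function $S^{(n-1)*}$, formula (\ref{1.2}) immediately gives
\begin{equation*}
P(Z=z) \;=\; (-1)^n\,\Delta^n S^{(n-1)*}(z)\,\binom{z+n-1}{n-1},
\end{equation*}
so the whole task reduces to evaluating $\Delta^n S^{(n-1)*}(z)$ in terms of $X$ and the normalizing moments $\mu_{i:1}$.

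The key step is the one-step identity that follows at once from (\ref{4.3}): for every $m\geq 1$,
\begin{equation*}
\Delta S^{(m)*}(x) \;=\; S^{(m)*}(x+1)-S^{(m)*}(x) \;=\; -\,\frac{S^{(m-1)*}(x+1)}{\mu_{m-1:1}}.
\end{equation*}
I would then prove by induction on $k\in\{0,1,\ldots,n-1\}$ that
\begin{equation*}
\Delta^{k}S^{(n-1)*}(x) \;=\; \frac{(-1)^{k}\,S^{(n-1-k)*}(x+k)}{\mu_{n-1-k:1}\cdots \mu_{n-2:1}},
\end{equation*}
the base case $k=0$ being trivial and the inductive step a direct application of the one-step identity above (with the product of $\mu$'s in the denominator picking up the factor $\mu_{n-1-k:1}$). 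Specialising to $k=n-1$ yields
\begin{equation*}
\Delta^{n-1}S^{(n-1)*}(x) \;=\; \frac{(-1)^{n-1}\,S(x+n-1)}{\mu\cdot\mu_{1:1}\cdots\mu_{n-2:1}},
\end{equation*}
since $\mu_{0:1}=\mu$ and $S^{(0)*}=S$.

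For the final application of $\Delta$, I would use $\Delta S(y)=-P(X=y)$ to obtain
\begin{equation*}
\Delta^{n}S^{(n-1)*}(z) \;=\; \frac{(-1)^{n}\,P(X=z+n-1)}{\mu\cdot\mu_{1:1}\cdots\mu_{n-2:1}},
\end{equation*}
and then substitute back into the formula from (\ref{1.2}); the signs $(-1)^n$ cancel and the claimed expression (\ref{4.15}) drops out. No real obstacle is expected: the only point requiring care is bookkeeping of the indices in the product of normalizing means so that, after $n-1$ applications of $\Delta$, exactly the constants $\mu_{0:1},\mu_{1:1},\ldots,\mu_{n-2:1}$ appear in the denominator.
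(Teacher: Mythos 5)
Your proposal is correct and follows essentially the same route as the paper: both apply the general Schur-constant identity (\ref{1.2}) (justified via Proposition \ref{Prop.A}) and reduce the problem to showing $\Delta^n S^{(n-1)*}(z)=(-1)^n P(X=z+n-1)/(\mu\,\mu_{1:1}\cdots\mu_{n-2:1})$, which the paper states directly as (\ref{4.16}) from the definition (\ref{4.3}) and which you derive in more detail by iterating the one-step difference identity. Your induction and index bookkeeping are accurate, so the argument is sound.
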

\begin{proof}From Proposition \ref{Prop.A}, the survival function of $X_1$ is $S^{(n-1)*}(x)$, and from its definition (\ref{4.3}),
\begin{equation}\label{4.16}
\Delta^n S^{(n-1)*}(x)= (-1)^n \frac{P(X=x+n-1)}{\mu \cdot\mu_{1:1} \cdots \mu_{(n-2):1}}.
\end{equation}
Inserting (\ref{4.16}) in (\ref{1.2}) yields (\ref{4.15}).
\end{proof}

\section{Conclusions}
Discrete Schur-constant models (of dimension $n$) can be generated by univariate survival functions that are $n$-monotone. This paper introduces a class of Schur-constant models that are generated by a wider class of univariate survival functions, i.e., those survival functions such that their $(n-1)-th$ equilibrium distribution exists.

\section*{References}

\indent $~\,\,\,\,\,$
Caramellino, L., Spizzichino, F., 1994. Dependence and aging properties of lifetimes with Schur-constant survival function. Probability in the Engineering and Informational Sciences 8, 103--111.
\par
Casta\~ner, A., Claramunt, M.M., Lef\`evre, C.; Loisel, S., 2015. Discrete Schur-constant models. Journal of Multivariate Analysis 140, 343--362.
\par
Chi, Y., Yang, J., Qi, Y., 2009. Decomposition of a Schur-constant model and its applications. Insurance: Mathematics and Economics 44, 398--408.
\par
Cox, D.R., 1962. Renewal Theory. Methuen $\&$ Co., London.
\par
Deshpande, J.V., Kochar, S.C., Singh, H., 1986. Aspects of positive ageing. Journal of Applied Probability, 23, 748--758.
\par
Gupta, R.C., 2012. Bivariate equilibrium distribution and association measure. IEEE Transactions on Reliability 61, 987--993.
\par
Lef\`evre, C., Loisel, S., 2010. Stationary-excess operator and convex stochastic orders. Insurance: Mathematics and Economics 47, 64--75.
\par
Lef\`evre, C., Loisel, S., 2013. On multiply monotone distributions, continuous or discrete , with applications. Journal of Applied Probability 50, 827--847.
\par
Lef\`evre, C., Loisel, S., Utev, S., 2017. Markov property in discrete Schur-constant models. Methodology and Computing in Applied Probability (first online).
\par
Li, S., 2011. The equilibrium distribution of counting random variables. Open Journal of Discrete Mathematics, 1, 127--135.
\par
Nair, N.U., Sankaran, P.G., 2014. Modelling lifetimes with bivariate Schur-constant equilibrium distributions from renewal theory. METRON 72, 331--349.
\par
Nelsen, R.B., 2005. Some properties of Schur-constant survival models and their copulas. Brazilian Journal of Probability and Statistics 19, 179--190.
\par
Panjer, H.H., 1981. Recursive evaluation of a family of compound distributions. ASTIN Bulletin 12, 22--26.
\par
Sunoj, S.M., 2004. Characterizations of some continuous distributions using partial moments. METRON 62, 353--362.
\par
Ta, B.Q., Van, C.P., 2017. Some properties of bivariate Schur-constant distributions. Statistics and Probability Letters 124, 69--76.
\par
Willmot, G.E., Drekic, S., Cai, J., 2005. Equilibrium compound distributions and stop-loss moments. Scandinavian Actuarial Journal 2005, 6--24.
\par
\end{document}